\newcommand{\excise}[1]{}
\newtheorem{thm}{Theorem}[section]
\theoremstyle{definition}
\newtheorem{alg}[thm]{Algorithm}
\newtheorem{remark}[thm]{Remark}
\numberwithin{equation}{section}
\newcommand\NN{\mathbb{N}}
\newcommand\OO{{\mathcal O}}
\newcommand\ZZ{\mathbb{Z}}
\newcommand\cP{{\mathcal P}}
\newcommand\powerset{{\cP}}
\newcommand{\quotes}[1]{{``#1"}}
\newcommand{\Zz}{\mathsf Z}
\begin{document}

\mbox{}
\title
[GPU-accelerated factorization sets in numerical semigroups]
{GPU-accelerated factorization sets in numerical semigroups via parallel bounded lexicographic streams}
\author[Thomas Barron]{Thomas Barron}
\address{Denver, CO 
}
\email{t@tbarron.xyz}
\subjclass[2020]{68W10 (11D04, 11-04)}
\keywords{numerical semigroup; numerical monoid; factorization set; diophantine linear equation; nonnegative integer solutions; lexicographic enumeration; parallel algorithms}

\date{\today}

\begin{abstract}
We describe a method for parallelizing the lexicographic enumeration algorithm for the factorization set of an element in a numerical semigroup via bounds.
This enables the use of GPU and distributed computing methods.
We provide a CUDA implementation with measured runtimes.
\end{abstract}
\maketitle
\vspace{-1em}
\section{Introduction}
\label{sec:intro}

The factorization set problem in numerical semigroups\footnote{Properly speaking, a \textit{numerical semigroup} is a cofinite subsemigroup of the naturals with addition; and furthermore, each numerical semigroup has a unique minimal set of generators. However, the present algorithm does not require its inputs to be the minimal generating set of a numerical semigroup: it can accept any stream of positive integer inputs, regardless of order, setwise coprimality, or minimality, including outright repeated generators.}
concerns itself with computing the set of all nonnegative solutions $(a_1,...,a_d)$ to the heterogeneous linear Diophantine equation $a_1g_1 + ... + a_dg_d = n$, for fixed nonnegative $g_i$ and $n$; $d$ is called the \textit{dimension}. That is, with respect to a set of generators $(g_1,...,g_d) \in \NN^d$, the \textbf{factorization set} of an element $n \in \NN$ is defined as:
\[\Zz : \NN \times \NN^d \to \powerset(\NN^d)\]
\[\Zz\left(n, (g_1, ..., g_d)\right) := \left\{(a_1,...,a_d) \middle| \sum_i a_ig_i = n\right\}\]

Finding the set of nonnegative solutions to general linear Diophantine equations, which may have variables on both sides of the equation, is a long-studied question in numerical computing. \cite{performance} provides a brief comparison of efficient single-threaded algorithms for computing such. Contemporary approaches can also include parallel techniques \cite{utk} \cite{perekhod}.

Restricting to the numerical semigroup case, with all variables on the left, allows for algorithmic optimizations as the solution set is much smaller, contained in the first orthant. Dynamic programming techniques \cite{dynamic} are possible in this restricted case. 

\textbf{Lexicographic enumeration} of nonnegative integer solutions to a diophantine linear equation is a particularly well-described technique, with algorithms dating to at least 1977 \cite{lex1978}.

In this work we describe, and provide a reference CUDA implementation for, a method of \linebreak\textbf{parallelizing} lexicographic enumeration of $\Zz(n, (g_1,...,g_d))$, by assigning many workers a distinct upper- and lower-bounded work slice of the final output.

In particular we note that the parallelization of lexicographic succession we describe is of a different nature than that described by Perekhod in \cite{perekhod}. 
The parallelism of \cite{perekhod} is limited to a parallel summing step and a parallel comparison step during the evaluation of the successor function; it evaluates the stream of successor values sequentially.
In comparison, our parallelism splits the result set, that is the results of the successor function, among multiple workers.\footnote{In the language of \cite{branchandbound} for parallel branch-and-bound algorithms in integer optimization, \cite{perekhod} exhibits \emph{type 1} parallelism, whereas our implementation exhibits \emph{type 2}.}

\section{Streams}

Informally, a \textbf{stream} is a potentially infinite list-like data type whose elements are produced and/or consumed one at a time. 

A stream is emitted by a \textit{producer} and is transformed or consumed by a \textit{consumer}. One consumer may have the ability to consume multiple streams; in the case of GPU parallelism, the CPU has the ability to consume the results of multiple streams produced in parallel on the GPU.  

The $\mathsf{nextCandidate}$ stream we will describe will be have the property that its next stream output value depends only on the previous stream output value (where relevant internal state members are included in the returned struct). It can therefore be described as a data struct $State$ and a pure function $nextState: State \to State$, although in practice this function may be implemented in-place to avoid copies.

As we define a stream producer for factorizations, potential consumers include: saving the factorizations to disk or in memory, to retain the entire result set; incrementing a counter for each result, to retain only the cardinality of the result set; or setting a boolean variable based on a predicate's value for each result, to retain whether any elements of the factorization set satisfy the predicate.

\section{Lexicographic enumeration}

An algorithm is called \quotes{greedy} if at each step it chooses the locally optimal solution, that is, the best solution available at each step. 


In our case the presented algorithm $\mathsf{nextCandidate}$ is greedy in that it attempts to produce the \linebreak lexicographically next-greatest factorization; if what it produces is not a factorization, it is a \linebreak\quotes{candidate}$\in\NN^d$ that is lexicographically greater than the next factorization.

\subsection{The stream data struct}

In the following structs, $\NN$ is represented in the usual fixed-bit-depth representation using $B$ bits, such that all values involved are less than $2^B$. Arbitrary-precision integer implementations such as Java's BigInteger \cite{javabigint} would enable the same function to handle all possible sizes of inputs without the need to fix a precision, with less predictable memory and performance characteristics.

\subsubsection{The (single-worker) lexicographic state struct}
$$\mathsf{type\ } LexicographicState := \{\ element: \NN,\ generators: \NN^d,$$
$$previousCandidate: \NN^d,\ wasValid: bool,\ endOfStream: bool\ \}$$

\subsubsection{Bound for enabling division of work}
The preceding struct enables a single individual worker to iteratively produce the entire desired set of factorizations. In order to enable multiple workers working in a parallel or distributed manner on the same factorization set, while maintaining a uniqueness guarantee, we can provide the stream struct with a \emph{bound}, which when reached will denote that the given worker's job is complete. 
$$\mathsf{type\ } ParallelLexicographicState := \{\ element: \NN,\ generators: \NN^d,\ bound: \NN^d,$$
$$previousCandidate: \NN^d,\ wasValid: bool,\ endOfStream: bool\ \}$$

The single-worker case, as well as the case of the first worker from which all other workers will split their work, can be represented by a bound of $(0,...,0)$ as this is, trivially, lexicographically less than any valid factorization.

The matter of the heuristic for choosing this bound will be addressed in a later section.

\subsection{The lexicographic next-candidate algorithm}

We compute candidates in lexicographic decreasing order. We note that this technique could readily be modified to proceed in increasing order.

\subsubsection{First candidate}
The first returned candidate is defined to be $(x, 0, ..., 0)$ where \linebreak$x = (n + g_1 - 1) / g_1$, where $/$ denotes integer division with remainder discarded. We note that this is $n / g_1$ if $n \equiv 0 \bmod g_1$, or $(n/g_1) + 1$ if $n / g_1$ has a remainder; and thus that $x g_1 \ge n$, and that $x$ is the \quotes{minimal} $x$ such that $xg_1 \ge n$.

If valid, then this factorization is lexicographically the greatest:
for any other factorization $(a_1,...,a_d)$ must have nonzero $a_i$ in some index other than the first, so $xg_1 = a_1g_1 + y$ where $y = \sum_2^d{a_i}$ is nonzero, implying its first index $a_1$ to be less than $x$, and thus $\vec a$ is lexicographically less than $(x,0,...,0)$.

If invalid, then this is lexicographically greater than any factorization: for no factorization $\vec a$ can be of the form $(x,a_2,...,a_d)$, as $\sum_i a_ig_i \ge xg_1 > n$. Thus any valid factorization will have $a_1 < x$ and will be lexicographically less than $(x,0,...,0)$.

\subsubsection{Subsequent candidates}

We define $\phi:\vec a \mapsto \sum_i a_ig_i$. We here define a function \linebreak$\mathsf{decrementAndSolve}: (\vec a : \NN^d,\ i: \NN) \to \NN^d$, which is defined to decrement $a_i$ and set $a_{i+1}$ to the minimal $m$ such that $n-\phi(\vec a) - mg_{i+1} \le 0$. We note that this value is $(n-\phi(\vec a)) / g_1$ if $(n-\phi(\vec a)) \equiv 0 \bmod g_{i+1}$, or $((n-\phi(\vec a))/g_1) + 1$ if $(n-\phi(\vec a)) / g_{i+1}$ has a remainder.

For subsequent candidates, use Algorithm 3.1, which can be described as follows: Take index $1 \le i < d$ maximal such that $a_i > 0$ (note the exclusion of index $d$);  set $a_d = 0$; 
perform $\mathsf{decrementAndSolve}(\vec a, i)$;
and return $\vec a$.

\subsubsection{End of stream}
If the previous candidate was of the form $(0,...,0,x)$, then it returns \linebreak$endOfStream = true$. Similarly, if the candidate it would return is less-or-equal than the worker's \emph{bound}, then it returns $endOfStream = true$.

\begin{alg}[The lexicographic greatest next-candidate algorithm]\ \\
$\mathsf{nextCandidate}:\ ParallelLexicographicState \to ParallelLexicographicState$\\
Input: $(n,\ (g_1,...,g_d),\ (a_1,...,a_d),\ (b_1,...,b_d),\ wasValid,\ endOfStream)$\\
Output: The input variables, modified (the function operates in-place)
\begin{enumerate}
    \item $\mathsf{if}\ (endOfStream)\ return;$ \footnote{This step allows the function to safely no-op in the case that the current worker didn't successfully receive a fresh slice of work during work division; see \quotes{Division of labor} section. }
   \item $\mathsf{let}\ i=0;\ \mathsf{for}\ (\mathsf{let}\ j = 1;\ j\le dim;\ j++) \{\ \mathsf{if}\ (a_j > 0)\ i = j\ \}$. \\(Finding the rightmost nonzero index excluding the final.)
   \item $\mathsf{if}\ (i=0) \{\ endOfStream=true;\ return;\ \}$, detecting if the last candidate was the final
    \item $a_d = 0$, resetting the final index to zero in case it was nonzero from having been solved for previously;
   \item $a_i --$ , which is the first step of $\mathsf{decrementAndSolve}(\vec a, i)$, consisting of steps 5-11
   \item $\mathsf{let}\ p = n - \phi(\vec a)$, the element to attempt to factor by $g_{i+1}$
   \item $\mathsf{let}\ m = p / g_{i+1}$, where division denotes integer division with remainder discarded.
   \item $\mathsf{let}\ r = p - m*g_{i+1}$, the remainder;
   \item $\mathsf{let}\ wasValid = true$, temporarily;
   \item $\mathsf{if}\ (r \neq 0) \{\ m ++;\ wasValid=false; \} $
   \item $a_{i+1} = m$
   \item $\mathsf{if}\ ((a_1,...,a_d) \leq_{lex} bound)\ \{\ endOfStream = true\ \}$
   \item Return $(n,\ (g_1,...,g_d),\ (a_1,...,a_d),\ (b_1,...,b_d),\ wasValid,\ endOfStream)$
\end{enumerate}
\end{alg}

We can directly observe the space complexity of (one iteration of) $\mathsf{nextCandidate}$ by summing the space used by the inputs, outputs, and any temporary variables allocated in the function. In this case, we have one $\NN$ ($B$ bits), three $\NN^d$ ($3*d*B)$, and 2 bools (2 bits) in the inputs, plus 5 temporary $\NN$ ($i,j,p,m,r)$ and one temporary bool, summing to $(3d + 6)B + 3$ bits, for $\OO(d*B)$ space complexity.

\begin{thm}
Let $\vec z$ be a $previousCandidate$ and let $\vec a = \mathsf{nextCandidate}(\vec z)$. Assume that $z_d$ is \quotes{minimal} in that there exists no valid factorization $(z_1,...,z_{d-1},x)$ for any $x<z_d$. Then the next candidate algorithm produces either the lexicographically next factorization, or else, if not a valid factorization, a candidate that is greater than the next factorization.
\end{thm}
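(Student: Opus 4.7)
My plan is to prove a single quantitative claim that implies both halves of the theorem at once: for every valid factorization $\vec b$ with $\vec b <_{lex} \vec z$ one has $\vec b \le_{lex} \vec a$, with equality forcing $\vec a$ itself to be a valid factorization. When $\vec a$ is valid this says $\vec a$ is the lexicographic maximum of factorizations below $\vec z$, i.e., the next one; when $\vec a$ is invalid it says every factorization below $\vec z$ is strictly below $\vec a$, which is exactly what ``greater than the next factorization'' means. Before running the comparison I would pin down the shape of $\vec a$: by the definition of $i$ in step (2), $z_j = 0$ for $i < j < d$, and by steps (4)--(11) the algorithm produces $a_j = z_j$ for $j < i$, $a_i = z_i - 1$, $a_{i+1} = \lceil p / g_{i+1}\rceil$ with $p = n - \sum_{j \le i} a_j g_j$, and $a_j = 0$ for $j > i+1$.

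For an arbitrary factorization $\vec b <_{lex} \vec z$, let $k$ be the first index where $b_k \neq z_k$, so $b_k < z_k$. The first substantive step is to rule out $k > i$: the range $i < k < d$ is impossible because $z_k = 0$ would then force $b_k$ to be negative, and $k = d$ is excluded by the hypothesis that $z_d$ is the minimal completion of $(z_1, \ldots, z_{d-1})$. Hence $k \le i$. If $k < i$, then $\vec a$ inherits $z_j$ at every position $j < i$, so $\vec b$ and $\vec a$ agree on positions $1, \ldots, k-1$ while $b_k < z_k = a_k$, giving $\vec b <_{lex} \vec a$. If $k = i$, then $b_i \le z_i - 1 = a_i$, with strict inequality again giving $\vec b <_{lex} \vec a$.

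The one remaining sub-case is $k = i$ with $b_i = a_i$, and this is where validity of $\vec b$ enters: $\sum_{j > i} b_j g_j = p$ forces $b_{i+1} g_{i+1} \le p$, hence $b_{i+1} \le \lfloor p / g_{i+1}\rfloor \le \lceil p / g_{i+1}\rceil = a_{i+1}$, and a strict inequality closes the argument. Equality $b_{i+1} = a_{i+1}$ requires $g_{i+1} \mid p$, which is precisely the branch in which step (10) is not taken, so $wasValid$ remains $\mathrm{true}$ and $\vec a$ is a genuine factorization; it further forces $b_j g_j = 0$ and hence $b_j = 0 = a_j$ for all $j > i+1$, yielding $\vec b = \vec a$. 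The main obstacle I anticipate is purely bookkeeping: keeping straight which coordinates of $\vec a$ are inherited from $\vec z$, which are reset to $0$ by step (4), and which are freshly computed by $\mathsf{decrementAndSolve}$, and verifying that the minimality hypothesis on $z_d$ is precisely what excludes the otherwise dangerous case $k = d$.
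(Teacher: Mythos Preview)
Your proposal is correct and follows essentially the same line as the paper: both arguments pin down the shape $\vec a = (z_1,\ldots,z_{i-1},z_i-1,a_{i+1},0,\ldots,0)$, then rule out any factorization strictly between $\vec a$ and $\vec z$ by casing on the first coordinate of disagreement and invoking the minimality of $z_d$ and the ceiling property of $a_{i+1}$. Your framing (``every factorization below $\vec z$ is $\le_{lex} \vec a$, with equality forcing validity'') is a slightly cleaner packaging of the same case split, and your explicit treatment of the equality sub-case $b_{i+1}=a_{i+1}\Rightarrow g_{i+1}\mid p\Rightarrow \vec a$ valid is a nice touch the paper leaves implicit.
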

\begin{proof}
Let $i$ be from step 2 above. Then $\vec z = (z_1,...,z_i,[0,...,0],z_d)$ and
$\vec a = (z_1,...,z_i - 1, a_{i+1},[0,...,0])$, with $z_d$ possibly zero, and possibly zero zeroes. We show that there lies no factorization $\vec x$ between $\vec a$ and $\vec z$.

As $\vec a$ and $\vec z$ share the first $i-1$ indices, a factorization $\vec x$ lying in between them must as well. In index $i$, $z_i-1 \le x_i \le z_i$, giving two cases.

If $x_i = z_i$, then $\vec x = (z_1,..,z_i,[0,...,0],x_d)$ for some $x_d < z_d$. But, per the assumption, there exists no such factorization as $z_d$ was \quotes{minimal}.

If $x_i = z_i - 1$, then $\vec x = (z_1,...,z_i-1, x_{i+1},...,x_d)$. We note that this shares the first $i$ coordinates with $a_i$. We note $x_{i+1} \ge a_{i+1}$ as we assumed $\vec a <_{lex} \vec x$.

If $x_{i+1} > a_{i+1}$, then $\phi(\vec x) > n$ because $a_{i+1}$ is the minimal such that $\phi(z_1,...,z_i-1,0,...,0) + a_{i+1}g_{i+1} \ge n$, as it was set using $\mathsf{decrementAndSolve}((z_1,...,z_i,0,...,0), i)$.

If $x_{i+1} = a_{i+1}$, then $x_j$ must be nonzero for some $i+2\le j \le d$, making $\phi(\vec x) > \phi(\vec a) \ge n$.

\end{proof}

We note that $\mathsf{nextCandidate}$ itself outputs a $previousCandidate$ which satisfies the assumption of Theorem 3.2, as it sets $a_d$ to either zero in step 4 or, if $d=i+1$, precisely the \quotes{minimal} value in step 11.


\begin{thm}
$\mathsf{nextCandidate}$ will reach the lexicographically next factorization in a finite number of steps, and thus will enumerate the entire factorization set in a finite number of steps.
\end{thm}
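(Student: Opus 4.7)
The plan is to combine two observations with the previous theorem. First, the sequence $\vec z_0, \vec z_1, \vec z_2, \ldots$ of candidates produced by successive calls to $\mathsf{nextCandidate}$ is strictly lexicographically decreasing: step 5 decrements $a_i$ by one, steps 4 and 11 touch only indices $\ge i$, and positions strictly less than $i$ are left alone, so $\vec z_{k+1} <_{lex} \vec z_k$. Second, I would prove by induction on the call count that every candidate satisfies a bound $\phi(\vec a) \le n + g_{\max} - 1$, where $g_{\max} := \max_j g_j$.

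For the $\phi$-bound I would first establish, as a side invariant carried along the induction, that after a call with chosen index $i$ the coordinates $a_{i+2}, \ldots, a_d$ are all zero. This holds because step 4 zeroes $a_d$, and the scan in step 2 guarantees $a_{i+1} = \cdots = a_{d-1} = 0$ at the moment $i$ is selected. With the invariant in hand, at step 6 one has $\phi(\vec a) = \sum_{j \le i} a_j g_j - g_i$, and step 11 sets $a_{i+1} = m$ with $p \le m g_{i+1} < p + g_{i+1}$ where $p = n - \phi(\vec a)$. If $p > 0$, then the resulting $\phi_{\mathrm{new}} = (n-p) + m g_{i+1} < n + g_{i+1} \le n + g_{\max}$. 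If $p \le 0$, then $m = 0$, so $\phi_{\mathrm{new}}$ is unchanged from step 6 and the inductive hypothesis propagates the bound. The base case is immediate: the first candidate $(x, 0, \ldots, 0)$ satisfies $\phi = x g_1 \le n + g_1 - 1$.

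Once both observations are in place, the set $\{\vec a \in \NN^d : \phi(\vec a) \le n + g_{\max} - 1\}$ is finite since each coordinate is bounded by $(n + g_{\max} - 1)/g_j$, and a strictly lex-decreasing sequence inside a finite set has finite length. Hence $\mathsf{nextCandidate}$ produces only finitely many candidates before $endOfStream$ is reached. Applying Theorem 3.2, each call yields either the lex-next factorization or an invalid candidate strictly lex-greater than it; thus only finitely many invalid candidates can intervene before the next factorization is produced, and iterating shows the entire factorization set is enumerated in finite time.

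The main obstacle will be the inductive step for the $\phi$-bound: it requires the structural zero-tail invariant to be stated and verified simultaneously with the bound, so that the deficit $p = n - \phi$ at step 6 really is the residual to be covered by the single coordinate $a_{i+1}$. Once that invariant is available, the two cases ($p > 0$ and $p \le 0$) reduce to short arithmetic checks, and the rest of the argument is a straightforward well-foundedness conclusion.
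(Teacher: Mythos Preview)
Your proof is correct, but it takes a more elaborate route than the paper's. Both arguments rest on the same two pillars---the sequence of candidates is strictly lexicographically decreasing, and all candidates lie in a finite subset of $\NN^d$---but you establish finiteness differently. The paper simply observes that every coordinate modification in the algorithm is one of three kinds (set to zero, decrement by one, or the assignment in $\mathsf{decrementAndSolve}$), and since $\phi(\vec a)\ge 0$ at step~6 forces $p\le n$, the value $m$ written in step~11 satisfies $m<n/g_{i+1}+1$. This gives a per-coordinate bound $a_j<n/g_j+1$ immediately, with no induction and no auxiliary invariant.

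Your approach instead bounds the aggregate $\phi(\vec a)\le n+g_{\max}-1$ by induction, carrying along the zero-tail invariant to make the inductive step go through. This is sound and arguably more informative (it tracks how far a candidate can overshoot $n$), but the zero-tail invariant is extra machinery that the paper's per-coordinate argument avoids entirely: the bound $p\le n$ at step~6 follows just from nonnegativity of the entries, without needing to know anything about the tail. So your argument buys a sharper global bound on $\phi$ at the cost of a simultaneous-induction structure, while the paper's buys brevity by bounding each coordinate in isolation.
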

\begin{proof}
The output will always be less than the input, by virtue of decrementing a certain index and leaving all indices to the left unchanged; and the output space is a bounded subset of $\NN^d$, and thus finite, by virtue of all coordinate modifications in the algorithm either setting to zero, decrementing by one, or performing $\mathsf{decrementAndSolve}$, so each index $a_i$ will be set less than $n/g_i + 1$. By the pigeonhole principle it will reach any point in the output space, in particular the lexicographically next or final factorization, in a finite number of steps.
\end{proof}

\subsubsection{Optional modulo optimization for penultimate index}\hspace{-0.3em}\footnote{This is likely the \quotes{congruence-based algorithm},
mentioned in \cite{slopes} as a direct predecessor of the \quotes{slopes algorithm}, whose performance was benchmarked in \cite{performance}. }
Let the rightmost nonzero index $i$ from step 1 be $d-1$, such that the rest of the steps consist of decrementing index $d-1$ and solving for index $d$. If the last candidate was valid, meaning $\phi(...,a_{d-1},b) = 0$, then $(...,a_{d-1}-1,x)$ will have $\phi(...,a_{d-1}-1,x) \equiv -g_{d-1} \bmod g_d$ for all $x$. This means there will be no valid candidates until index $d-1$ decrements to $m$ where $m$ is the order of $a$ in $\ZZ_{g_d}$. Since invalid candidates do not affect the final result set of factorizations, this means an implementer may optionally choose to use the above logic in Algorithm 3.1 for the case $i=d-1$, skipping straight to $(...,a_{d-1}-m,x)$.

Similarly, if the last 2 generators have a common denominator $D$, and $\phi(...,a_{d-2},x,y)=0$, then $\phi(...,a_{d-2}-1,z,w) \equiv -g_{d-2} \bmod D$ for all $z,w$. The same applies if the last $n$ generators share a common denominator.

This was implemented in our reference CUDA implementation for index $d-1$ and runtimes were collected with it enabled vs disabled. We found significant runtime reductions in all cases when enabled.
\pagebreak
\subsection{Wrapper stream for returning only factorizations}

If one wanted to define a stream that produced actual factorizations, rather than factorization candidates, one would wrap this algorithm in another which only yields values when a valid factorization was produced, ignoring invalid candidates.

\begin{alg}[Wrapper stream for returning only factorizations]\ \\
$\mathsf{nextFactorization}: GreedyState \to GreedyState$\\
Input: $(prevState: GreedyState)$\\
Output: $GreedyState$

\begin{enumerate}
    \item $let\ result = \mathsf{nextCandidate}(prevState);$
    \item $while\ (!result.wasValid)\ result=\mathsf{nextCandidate}(result)); $
    \item Return $result$
\end{enumerate}
\end{alg}

Considered as GPU kernels directly (see GPU implentation section below), $\mathsf{nextCandidate}$ exhibits better SIMD (same instruction multiple data) performance compared to the $\mathsf{nextFactorization}$ function. This is because $\mathsf{nextCandidate}$ allows each iteration to return after only having decremented one coordinate and solved for one other, regardless of whether that successfully resulted in a factorization; as opposed to $\mathsf{nextFactorization}$ which performs that action in an unbounded while loop (meaning that some threads would be idle if their while loop finishes before others). For this reason we will work directly with the next candidate stream rather than the next factorization stream, such that checking validity of and saving the resulting candidates will be the responsibility of the calling code.

\section{Parallelization}

As lexicographic order is a total order on $\NN^d$, the overall result set can be partitioned into nonoverlapping subsets for each worker via the use of a \textbf{bound}, as mentioned in the above section on the stream data struct. Specifically, if an existing worker's previous candidate was $\vec a$ and existing bound was $\vec b$, the introduction of a new bound $\vec c$ lying in the range $(\vec a, \vec b)$ allows the existing worker's work slice to be bifurcated into the ranges $(\vec a, \vec c)$ and $(\vec c, \vec b)$ which will be taken by the existing and new workers respectively. If $\vec c$ is a valid factorization, it must be saved either now during bound splitting, or later as part of one of the workers' results.

The nonoverlapping nature of the resulting repeatedly bifurcated result sets allows the results to be consumed directly without any intermediate step to check a new result against existing results for duplication or minimality.

\subsection{Division of labor}

While there surely are many valid ways to find a new bound lying in an existing range, one very simple work division example would be, with last candidate $\vec a$, bound $\vec b$, new bound $\vec c$:

\begin{alg}[Example work-splitting algorithm]\ \\
$\mathsf{splitWork}: ParallelLexicographicState \to \NN^d$
\begin{enumerate}
\item Copy $\vec b$ to $\vec c$.
\item Set $1\leq i < d$ minimal such that $a_i > 0$ and $a_i \neq b_i$.
\item Set $c_j=0$ for $j \ge i+2$
\item Perform $\mathsf{decrementAndSolve}(\vec c, i)$
\end{enumerate}
\end{alg}

This is, in short, performing $\mathsf{decrementAndSolve}$ on the \emph{leftmost} suitable coordinate, instead of the \emph{rightmost} as in $\mathsf{nextCandidate}$.

This leaves the remainder of the solutions $\vec d$ with $d_j = a_j\ \forall 1 \leq j \leq i$ to the first worker, and the lets the second worker solve solutions lexicographically less than $\vec c$.

\begin{remark}
We note that the resulting $\vec c$ satisfies the assumption of Theorem 3.2 as it sets $c_d$ to either zero or, if $i+1=d$, it sets $c_{i+1}$ to the required \quotes{minimal} value. This guarantees that $\mathsf{nextCandidate}$ continues to return either the next factorization or a candidate greater than the next factorization, when given a $previousCandidate$ which was the result of Algorithm 4.1 instead of Algorithm 3.1.
\end{remark}

We do not prove that the above work-splitting function will always succeed in producing a new bound lying  strictly between the existing worker's previous candidate and bound. After all, if the number of workers exceeds the number of remaining $\mathsf{nextCandidate}$ stream values, the pigeonhole principle dictates that it is impossible to assign each worker a nonempty result set. We caution implementers of future work division functions that it is important to check whether the proposed new bound does indeed lie in the range $(\vec a, \vec b)$, discarding the proposed new bound if not.

Various additional heuristics could be used to optimize desired properties of the new bound, which may take into account relative worker processing power, the approximate expected result size of each resulting slice, the number of workers wishing to receive new work at once, etc.\footnote{See \cite[Section 2.3 Algorithmic Design Issues]{branchandbound} for more discussion of the \quotes{initial generation and allocation of work units} and \quotes{subsequent allocation and sharing of work units}, where \quotes{the objectives of such a policy should be to balance the workload among processes}.} Our implementation is agnostic to these concerns: it keeps the \quotes{original} work slice at the tail of its list of work slices; work splitting starts at the tail and works backwards; and each work slice $i$, if it needs fresh work, tried to split slice $i+1, i+2,...$ until it succeeds or reaches the tail.

\subsection{Distributed and multi-core CPU systems}
In addition to the reference GPU implementation we describe below, the above technique of bounded lexicographic enumeration is directly applicable to distributed environments and multithreaded CPU systems. Rather than \quotes{workers} being GPU cores operating on distinct slices of a GPU memory grid, they are distributed nodes operating on the node's host memory or CPU cores operating on the CPU's memory, respectively.

\section{GPU implementation}
As GPUs offer many cores on which work can simultaneously operate, GPUs are well suited for handling parallel tasks. We provide a reference CUDA implementation in the supplementary code files.

A GPU \textbf{kernel} is a function defined to run on one \emph{thread} in a \emph{block} of a \emph{grid} on a GPU. When the function is called from the CPU with certain grid parameters, it launches the kernel on many threads in each block on the grid in parallel, splitting blocks among the GPU's SMs (streaming multiprocessors), blocking the CPU call until all threads in the grid have completed the kernel.

In our implementation, a grid of $ParallelLexicographicState$ structs is allocated on the device. Initially, $\mathsf{splitWork}$ is called for each grid element to attempt to split work from some existing state. Then $\mathsf{nextCandidate}$ is repeatedly run as a CUDA kernel, operating on all grid elements simultaneously. Then the results will be checked for validity and saved if valid (although, see the below section on device buffers). Work splitting reoccurs periodically (see section 5.1.2).

The matter of choosing optimal the grid characteristics (threads per block and blocks per thread) greatly affects overall runtime, but the optimal values are highly dependent on both the particular hardware used and the particular problem chosen (dimension and generators).

\subsection{Transfer bottleneck}
GPU computing remains bottlenecked by relatively slow transfer speeds between host and device memory (CPU and GPU respectively). It is therefore worthwhile to attempt to minimize the frequency with which this bottleneck is encountered.

\subsubsection{Large device buffer for each worker}
If the ratio of device and host memory to grid size involved is comfortably high, it is feasible to implement a method of utilizing separate on-device buffers for each worker, which will be dumped when any worker's buffer becomes full. As the buffer is only written one element at a time (per thread), it is advantageous to make the device buffers large (perhaps 1000 factorizations per grid element) to allow as many consecutive kernel calls as possible before needing to copy from device buffer to host.

\subsubsection{Running multiple kernels between bound splitting}
If work splitting happens on the host (as it does in the provided reference implementation), it involves copying all states to and from the device, which is a relatively expensive task if attempted to be performed after every single $\mathsf{nextCandidate}$ kernel call. A number $kernelCallsBetweenFreshBounds$ can be used to call several (perhaps 1024) kernels in a row before attempting to give all the workers fresh bounds, allowing a higher amount of work to be performed before requiring any host-device transfers.

\subsection{Performance}
Table 1 shows the runtimes of both our CUDA implementation and the GAP \cite{GAP} function $\mathsf{FactorizationsElementWRTNumericalSemigroup}$ \cite{numericalsgps} on a fixed set of inputs. The CUDA implementation was run with the \quotes{modulo} optimization disabled and enabled respectively for comparison on select inputs.

The runtimes show that compared to the GAP implementation, our CUDA implementation is comfortably faster in low dimensions, specifically $d \le 5$, with the advantage increasing as the element $n$ increases. The GAP function was faster in dimension 5 for small elements, but became slower as the elements became larger. In dimensions greater than 5, the GAP function was faster in all measured cases.

\begin{table}
{\footnotesize
\begin{tabular}{|c|c|c|c|c|c|c|}
\hline
Generators & $n$ & $|\mathsf Z(n)|$ & CUDA (s) & CUDA w/ modulo (s) & GAP 4.13 \cite{numericalsgps} (s)  \\
\hline
13,37,38 & 1000&30&0&&0\\
&20000&10991&0&&0.9\\
&45000&55503&1&&9\\
&70000&134209&1&&36\\
&150000&615856&4&&620\\
&225000&1385404&10&&\\
&300000&2462699&18&&\\
&500000&6840027&52&&\\
\hline
...,40&1000&274&1&1&0\\
&5000&29601&2&1&0.75\\
&9000&169752&4&2&6\\
&13000&508263&9&5&34\\
&17000&1132667&20&9&112\\
&20000&1841247&31&16&163\\
&23000&2796813&44&22&279\\
&27000&4518931&66&34&635\\
&45000&20861676&197&132&OOM\\
\hline
...,41&1000&1920&1&1&0.015\\
&3000&125780&13&7&2.4\\
&5000&928872&91&29&28\\
&7000&3501274&240&69&143\\
&9000&9466814&&131&486\\
\hline
...,42&1000&10873&&4&0.1\\
&1500&70427&&19&2\\
&2000&273456&&53&4\\
&3000&1910466&&335&40\\
\hline
...,43&1000&52036&&27&0.5\\
&1500&473670&&201&6.7\\
&2000&2369185&&760&38\\
\hline
\end{tabular}
}
\medskip
\caption{Runtimes in seconds for computing $\mathsf Z(n)$. Computations were performed on a machine with an AMD Ryzen 3900X CPU and an NVIDIA RTX 3080 GPU. CUDA implementation parameters were $68*2$ grid size (the 3080 has 68 SMs), 8 threads per block, 1000 buffer per thread, and 1024 $kernelsBetweenNewBounds$. Our implementation disables the modulo optimization in dimension 3.\linebreak \linebreak We note that while the GAP function can run out of memory for extremely large result sets, our implementation does not have a comparable memory limitation as it does not need to store the result set in memory, as each iteration of $\mathsf{nextCandidate}$ only needs access to the previous iteration's returned state struct.}
\end{table}

\section{Future work}

\subsection{Dynamic behavior}

In $\mathsf{nextCandidate}$, if e.g. $i=d-3$, after decrementing $a_i$ and zeroing $a_d$, all of the remaining factorizations $\vec c$ of the form $(a_1,...,a_i,c_{d-2},c_{d-1},c_d)$ are in bijection with $\Zz(n-\phi(\vec a), (g_{d-1},g_{d-1},g_d))$. If $\Zz(x, (g_{d-2},g_{d-1},g_{d}))$ is stored for all $0 \le x \le m$ for some small value of $m$, and these sets are made accessible to each worker during their $\mathsf{nextCandidate}$ evaluation, then when index $i=d-3$ and $\phi(\vec a) \le m$, the remaining solutions $\vec c$ with $c_j = a_j$ for all $1 \le j \le d-3$ could be returned all at once by copying $\Zz(n-\phi(\vec a), (g_{d-2},g_{d-1},g_{d}))$ and prepending $(a_1,...,a_i)$ to each. (The signature and behavior of the $\mathsf{nextCandidate}$ function and its calling code would need to be modified appropriately to allow an output of more than one result.)

We give the above example of dynamic behavior for $i=d-3$, but it is valid for any number of the trailing indices.

\subsection{Work division improvement}
As noted in the Work division section, it may be possible to improve the division of labor between workers, perhaps increasing the average number of $nextCandidate$ stream results returned per worker before reaching its assigned bound, which would improve overall throughput as work reassignment only happens every $kernelsBetweenFreshBounds$ kernels (1024 in our implementation).

\subsection{Hybrid CPU-GPU work pool}
The provided CUDA implementation uses only one CPU thread, which is blocked for the duration of the next candidate kernel calls (and does not perform any $\mathsf{nextCandidate}$ calls itself, only work-splitting). As modern CPUs can run many threads in parallel (the Ryzen 3900X which evaluated the performance comparisons has 12 cores offering 24 simultaneous threads, 23 of which of which were sitting idle), it would be possible to simultaneously run workers on many CPU threads as well, receiving work slices during the work division steps and running the next candidate function while the GPU kernels are running.




\end{document}